\title{Tuple domination on graphs with the \\ consecutive-zeros property\thanks{Partially supported by grants PICT ANPCyT 0410 (2017-2020) and 1ING631 (2018-2020)}}
\author{M.P. Dobson\inst{1}
\and
V. Leoni\inst{1,2}
\and
M. I. Lopez Pujato\inst{1,2}
\institute{FCEIA, Universidad Nacional de Rosario, Rosario, Santa Fe, Argentina\\
\email{pdobson@fceia.unr.edu.ar}
\and
CONICET, Argentina\\
\email{valeoni@fceia.unr.edu.ar, lpujato@fceia.unr.edu.ar}
}}
\begin{document}
\maketitle

\begin{abstract} 
 The \emph{$k$-tuple domination problem}, for a fixed positive integer $k$, is to find a minimum sized vertex subset such that every vertex in the graph is dominated by at least $k$ vertices in this set. 
The $k$-tuple domination is NP-hard even for chordal graphs. For the class of circular-arc graphs, its  complexity remains open for $k\geq 2$. A $0,1$-matrix has the \emph{consecutive 0's property (C0P) for columns} if there is a permutation of its rows that places the 0's consecutively in every column. 
Due to A. Tucker, graphs whose augmented adjancency matrix has the C0P for columns are circular-arc.  In this work we study  the $k$-tuple domination problem on graphs $G$ whose augmented adjacency matrix has the C0P for columns, for $ 2\leq k\leq |U|+3$, where $U$ is the set of universal vertices of $G$. From an algorithmic point of view, this takes linear time.

\smallskip
\noindent{\textbf{Keywords:}} $k$-tuple dominating sets, stable sets, adjacency matrices, linear time

\end{abstract}

\section{Preliminaries, definitions and notation}\label{def}

In this work we consider finite simple graphs $G$, where $V(G)$ and $E(G)$ denote its vertex and edge sets, respectively.
$G'$ is a (vertex) \emph{induced subgraph} of $G$ and write $G'\subseteq G$, if $E(G')=\{uv: uv\in E(G), \{u,v\}\subseteq  V'\}$, for some $V'\subseteq  V(G)$. When neccesary, we use $G[V']$ to denote $G'$.
Given $S\subseteq  V(G)$, the induced subgraph $G[V(G)\setminus S]$ is denoted by $G-S$.
For simplicity, we write $G-v$ instead of $G-\{v\}$, for $v\in V(G)$. 

The \emph{(closed) neighborhood} of $v\in V(G)$ is $N_G[v] = N_G(v) \cup \{v\}$, where $N_G(v)=\{u\in V(G): uv \in E(G) \}$. The \emph{minimum degree} of $G$ is denoted by $\delta(G)$ and is the minimum between the cardinalities of $N_G(v)$ for all $v$.

A vertex $v \in V(G)$ is \emph{universal} if $N_G[v]=V(G)$.

A \emph{clique} in $G$ is a subset of pairwise adjacent vertices in $G$.

 A \emph{stable set} in $G$ is a subset of mutually nonadjacent vertices in $G$ and the cardinality of a stable set of maximum cardinality in $G$ is denoted by $\alpha(G)$. 
 
 A graph $G$ is \emph{circular-arc} if  it has an intersection model consisting of arcs  on a circle, that is, if there is a one-to-one correspondence between the vertices of $G$ and a family of arcs on a circle such that two distinct vertices are adjacent in $G$ when the corresponding arcs intersect.
A graph $G$ is an \emph{interval graph} if it has an intersection model consisting of intervals on the real line, that is, if there exists a family ${\cal I}$ of intervals on the real line and a one-to-one correspondence between the vertices of $G$ and the intervals of ${\cal I}$ such that two vertices are adjacent  in $G$ when the corresponding intervals intersect. A \emph{proper interval graph} is an interval graph that has a \emph{proper interval model}, that is, an intersection model in which no interval contains another one. 
Circular-arc graphs constitute a superclass of proper interval graphs and they are of interest to workers in coding theory because of their relation to ``circular'' codes.   

\bigskip

$J$ denotes the square matrix whose entries are all 1's and  $I$ the identity matrix, both of appropriate sizes.

Associated with a graph $G$ is the \emph{adjacency matrix} $M(G)$  defined with entry $m_{ij}=1$ if vertices $v_i$ and $v_j$ are adjacent, and $m_{ij}=0$ otherwise. Note that $M(G)$ is symmetric and has $0's$ on the main diagonal. The  \emph{augmented adjacency matrix} or \emph{neighborhood matrix} $M^*(G)$ with entries $m^*_{ij}$ is defined as $M^*(G):=M(G) + I $, i.e.  $M(G)$ with $1's$ added on the main diagonal.

A $0,1$-matrix has the \emph{consecutive 0's} property (C0P) for columns if there is a permutation of its rows that places the 0's consecutively in every column. This property was presented by Tucker in~\cite{Tucker}.

\begin{figure}[ht]\label{ejemplo1}
\centering
\includegraphics[scale=1.1 ]{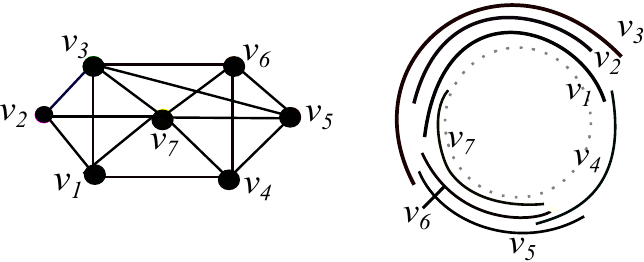}
 
\caption{A graph $G$ with the C0P for columns  and  a circular-arc model for $G$.}
\end{figure}

Fulkerson and Gross \cite {fulkerson} have described an efficient algorithm to test whether a $0,1$-matrix has the C0P for columns and to obtain a desired row permutation when one exists.

\begin{figure}[ht]\label{ejemplo2}
\centering

 \; \; \; $M^*(G)=\left(
\begin{array}{ccccccccccccccccc}
1 && 1 && 1 && 1 && 0 && 0 && 1\\
1 && 1 && 1 && 0 && 0 && 0 && 1\\
1 && 1 && 1 && 0 && 1 && 1 && 1\\
1 && 0 && 0 && 1 && 1 && 1 && 1\\
0 && 0 && 1 && 1 && 1 && 1 && 1\\
0 && 0 && 1 && 1 && 1 && 1 && 1\\
1 && 1 && 1 && 1 && 1 && 1 && 1\\

\end{array}
\right)$.
 
\caption{The augmented adjacency  matrix for graph $G$ in Figure \ref{ejemplo1}.}
\end{figure}

\bigskip

For a nonnegative integer $k$, $D\subseteq V(G)$ is a \emph{$k$-tuple dominating set} of $G$ if $|N_G[v]\cap D|\geq k$, for every $v\in V(G)$. Notice that $G$ has $k$-tuple dominating sets if and only if $k \leq \delta(G)+1$ and, if  $G$ has a $k$-tuple dominating set $D$, then $|D|\geq k$. When $k\leq \delta(G)+1$, $\gamma_{\times k}(G)$ denotes the  cardinality of a  $k$-tuple dominating set of $G$ of minimum size and  $\gamma_{\times k}(G)=+\infty$, when $k> \delta(G)+1$. $\gamma_{\times k}(G)$ is called the \emph{$k$-tuple dominating number} of $G$. Observe that  $\gamma_{\times 1}(G)= \gamma(G)$, the usual domination number. Besides, note that $\gamma_{\times 0}(G)=0$  for every graph $G$~\cite{HararyH}. When $G$ is not connected, the $k$-tuple dominating number of $G$ is defined as the sum of $k$-tuple dominating numbers of its connected component.

For a graph $G$, a positive integer $t$  and  $S\subseteq V(G)$ with $t\leq |S|$, we say that $S$   \emph{$t$-dominates} $G$ if  $S$ is a $t$-tuple dominating set of $G$.

\bigskip

Corcerning computational complexity results, the decision problem (fixed $k$) associated with this concept is NP-complete even for chordal graphs~\cite{DNL2011}.  
It is natural then try to find subclasses of chordal graphs where these problems are ``tractable''.

Efficient algorithms for the problem corresponding to $k=1$ (the usual domination problem) are alrealdy presented in ~\cite{Hsu} and ~\cite{chang} for any circular-arc graph. 
Besides, among the known polynomial time solvable instances of the problem for the case $k=2$, proper interval graphs  constitute the maximal subclass of chordal graphs already studied~\cite{2TDinterval}. Proper interval were characterized by Roberts~\cite{Roberts68} as those graphs whose augmented adjacency matrices have the  consecutive 1's property for columns (defined also by Tucker~\cite{Tucker} in a similar way as the C0P property).

With a different approach,  polynomial algorithms were recently provided for some variations of domination, say $k$-domination and  total $k$-domination (for fixed $k$) for proper interval graphs~\cite{LP1}.

The slightly diference involved in $k$-domination, $k$-tuple domination and total $k$-domination problems makes them useful in various applications, for example in forming sets of representatives or in resource allocation in distributed computing systems. However, the problems are all known to be NP-hard and also hard to approximate~\cite{MR3027964}.

\medskip

In this work we study $2$- and $3$-tuple domination on the subclass of circular-arc graphs that have the C0P for columns. 
Our results allow to solve the $k$-tuple domination problem in this class for $2\leq k \leq |U|+3$.  
In Sections \ref{seccion2} and \ref{seccion3}, we present some special properties on $k$-tuple domination for any positive integer $k$. The study of the problem for $k=2$ and $k=3$ is developed in Section \ref{seccion4} and further analysis for the general case is given in Section \ref{seccion5}. 

\section{$k$-tuple dominating sets on graphs with universal vertices}\label{seccion2}

From the definition, it is clear that $\gamma_{\times k}(G)\geq k$ for every graph $G$ and positive integer $k$. Besides, it is remarkable  that  $S\subseteq V(G)$  $|S|$-dominates $G$ if and only if each vertex of $S$ is a universal vertex. Then,

\begin{lemma}\label{Umasdek}
Let $G$ be any graph, $U$ the set of its  universal vertices and $k$ a positive integer. Then  $\gamma_{\times k}(G)=k$ if and only if $\left|U\right|\geq k$.
\end{lemma}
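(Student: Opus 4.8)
The plan is to prove both directions of the equivalence directly from the definitions, using the observation already highlighted in the excerpt: a set $S \subseteq V(G)$ is an $|S|$-tuple dominating set of $G$ if and only if every vertex of $S$ is universal. I will phrase the argument so that it handles the case $k \leq \delta(G)+1$ and the degenerate case $k > \delta(G)+1$ uniformly, since in the latter case $\gamma_{\times k}(G) = +\infty \neq k$ and we must also check $|U| < k$ holds there.

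First I would prove the ``if'' direction. Suppose $|U| \geq k$ and let $S$ be any subset of $U$ with $|S| = k$. For every $v \in V(G)$ and every $u \in S$, since $u$ is universal we have $v \in N_G[u]$, equivalently $u \in N_G[v]$; hence $S \subseteq N_G[v]$ and $|N_G[v] \cap D| \geq |S| = k$ where $D = S$. So $S$ is a $k$-tuple dominating set of size $k$, which together with the general bound $\gamma_{\times k}(G) \geq k$ gives $\gamma_{\times k}(G) = k$.

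Next I would prove the ``only if'' direction. Suppose $\gamma_{\times k}(G) = k$ and let $D$ be a $k$-tuple dominating set with $|D| = k$. For each $v \in V(G)$ we have $|N_G[v] \cap D| \geq k = |D|$, which forces $N_G[v] \cap D = D$, i.e. $D \subseteq N_G[v]$. In particular, fix any $u \in D$; then for every $v \in V(G)$ we get $u \in N_G[v]$, equivalently $v \in N_G[u]$, so $N_G[u] = V(G)$ and $u$ is universal. Thus $D \subseteq U$ and $|U| \geq |D| = k$.

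There is essentially no hard step here; the only point requiring a little care is making sure the statement is interpreted correctly when no $k$-tuple dominating set exists (when $k > \delta(G)+1$). In that case $\gamma_{\times k}(G) = +\infty$, so the left-hand side of the equivalence is false; one should note that the right-hand side is also false, because $|U| \geq k$ would produce a $k$-element universal set and hence a $k$-tuple dominating set, contradicting $k > \delta(G)+1$ (indeed a single universal vertex already forces $\delta(G) \geq |V(G)|-1$, and $|U| \geq k$ forces $\delta(G) \geq |V(G)| - 1 \geq k - 1$). Since both sides are false, the equivalence holds vacuously in this regime, and the two directions above cover the regime $k \leq \delta(G)+1$.
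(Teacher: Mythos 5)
Your proof is correct and follows exactly the route the paper intends: the paper states the lemma as an immediate consequence of the bound $\gamma_{\times k}(G)\geq k$ and the observation that $S$ $|S|$-dominates $G$ if and only if every vertex of $S$ is universal, and your two directions are precisely the unpacking of that observation. The extra care you take with the infeasible regime $k>\delta(G)+1$ is a harmless (and slightly more thorough) addition.
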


Notice that, when $u$ is  a universal vertex of $G$ and $D$ is a $k$-tuple dominating set of $G$ with $u$ not in $D$, then by interchanging $u$ with any vertex of $D$, we obtain another $k$-tuple dominating set containing $u$. Formally,

\begin{remark}\label{universal}
If $G$ is  a graph and $u$ a universal vertex of $G$, there exists  a  $k$-tuple dominating set $D$ of $G$ such that $u\in D$. 
\end{remark}

From this remark, it is easy to prove the following relationship:

\begin{proposition}\label{masuno1} Let  $G$ be a graph, $u$ a universal vertex of $G$ and  $k$ a  positive integer. Then $$\gamma_{\times k}(G)=\gamma_{\times (k-1)}(G-u)+1.$$
\end{proposition}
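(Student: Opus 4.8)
The plan is to prove the identity by establishing the two inequalities $\gamma_{\times k}(G)\le \gamma_{\times(k-1)}(G-u)+1$ and $\gamma_{\times k}(G)\ge \gamma_{\times(k-1)}(G-u)+1$ separately, after first disposing of the degenerate case in which either side is $+\infty$. Since $u$ is universal it is adjacent to every other vertex, so deleting it lowers the degree of each remaining vertex by exactly one; hence $\delta(G-u)=\delta(G)-1$, and consequently $k\le\delta(G)+1$ holds if and only if $k-1\le\delta(G-u)+1$. Thus $\gamma_{\times k}(G)=+\infty$ precisely when $\gamma_{\times(k-1)}(G-u)=+\infty$, in which case the stated equation reads $+\infty=+\infty$. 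From now on we may therefore assume both quantities are finite.

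For the upper bound, take a minimum $(k-1)$-tuple dominating set $D'$ of $G-u$ and set $D:=D'\cup\{u\}$. Since $u\notin V(G-u)$ we have $|D|=|D'|+1=\gamma_{\times(k-1)}(G-u)+1$. The set $D$ is a $k$-tuple dominating set of $G$: for every $v\in V(G-u)$, $N_G[v]\cap D\supseteq (N_{G-u}[v]\cap D')\cup\{u\}$ with $u$ contributing a genuinely new element, so $|N_G[v]\cap D|\ge (k-1)+1=k$; and for $v=u$, using $N_G[u]=V(G)$ we get $|N_G[u]\cap D|=|D|\ge k$ because any $(k-1)$-tuple dominating set has at least $k-1$ vertices. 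Hence $\gamma_{\times k}(G)\le|D|=\gamma_{\times(k-1)}(G-u)+1$.

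For the lower bound, let $D$ be a minimum $k$-tuple dominating set of $G$. By Remark~\ref{universal}, whose interchange argument preserves cardinality, we may assume $u\in D$. Put $D':=D\setminus\{u\}$. For each $v\in V(G-u)$ we have $N_{G-u}[v]\cap D'=(N_G[v]\cap D)\setminus\{u\}$, and since $u$ is universal and lies in $D$ exactly one element is removed, so $|N_{G-u}[v]\cap D'|=|N_G[v]\cap D|-1\ge k-1$. Thus $D'$ is a $(k-1)$-tuple dominating set of $G-u$ of size $|D|-1=\gamma_{\times k}(G)-1$, whence $\gamma_{\times(k-1)}(G-u)\le\gamma_{\times k}(G)-1$. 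Combining the two bounds yields the claimed equality.

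I do not expect a serious obstacle here; the only points requiring care are the bookkeeping with the $+\infty$ convention for graphs admitting no $k$-tuple dominating set, which is handled by the observation $\delta(G-u)=\delta(G)-1$, and the fact that the vertex interchange in Remark~\ref{universal} does not change the size of the set, so it may legitimately be applied to a minimum $k$-tuple dominating set.
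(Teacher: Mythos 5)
Your proof is correct and follows essentially the same route as the paper's: the upper bound by adjoining $u$ to a minimum $(k-1)$-tuple dominating set of $G-u$, and the lower bound by using Remark~\ref{universal} to assume $u\in D$ and then deleting it. Your additional handling of the $+\infty$ case via $\delta(G-u)=\delta(G)-1$ is a harmless (and slightly more careful) supplement to what the paper writes.
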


\begin{proof}

Let $D$ be a $k$-tuple dominating set of $G$ with $\left|D\right|=\gamma_{\times k}(G)$. 

If  $u\in D$, then $D-u$ is a $(k-1)$-tuple dominating set of $G-u$, thus $\gamma_{\times(k-1)}(G-u)+1\leq \left|D\right|=\gamma_{\times k}(G)$.
If $u\notin D$, from Remark \ref{universal} we can build  a $k$-tuple dominating set $D'$  of $G$ with $\left|D'\right|=\gamma_{\times k}(G)$ and  $u\in D'$ and proceed as above with $D'$ instead of $D$.

On the other side, let $D$ be a minimum $(k-1)$-tuple dominating  set of $G-u$. It is clear that  $D\cup\left\{u\right\}$ is a $k$-tuple dominating set of $G$ since  $u$ is a universal vertex.
Then $\gamma_{\times k}(G)\leq\left|D\cup\left\{u\right\}\right|=\left|D\right|+1=\gamma_{\times(k-1)}(G-u)+1$ and the proof is complete.
\qed\end{proof}

The above lemma can be generalized as follows:

\begin{lemma}\label{masuno2}
Let  $G$ be a graph,  $U$ the set of its  universal vertices and $k$ a positive integer with $\left|U\right|\leq k-1$. Then   $$\gamma_{\times k}(G)=\gamma_{\times (k-\left|U\right|)}(G-U)+\left|U\right|.$$
\end{lemma}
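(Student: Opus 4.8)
The plan is to prove this by induction on $|U|$, using Proposition~\ref{masuno1} as the base step and engine of the induction. The statement for $|U|=1$ is exactly Proposition~\ref{masuno1}, so that case is already in hand. For the inductive step, suppose the claim holds whenever the set of universal vertices has size at most $m-1$, and let $G$ be a graph with $|U| = m \le k-1$. Pick any $u \in U$. By Proposition~\ref{masuno1}, $\gamma_{\times k}(G) = \gamma_{\times(k-1)}(G-u) + 1$.

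The key technical point I would need to nail down is the following: if $U$ is the set of universal vertices of $G$ and $u \in U$, then the set of universal vertices of $G-u$ is exactly $U \setminus \{u\}$. Indeed, removing $u$ does not destroy adjacency between any remaining pair of vertices, so every vertex of $U\setminus\{u\}$ remains adjacent to all of $V(G-u)$ and hence stays universal; conversely, if some $w \notin U$ became universal in $G-u$, then in $G$ the vertex $w$ was adjacent to everything except possibly $u$, but since $u$ is universal $w$ is adjacent to $u$ as well, so $w$ would already have been universal in $G$ — a contradiction. Thus $G-u$ has universal-vertex set $U' = U\setminus\{u\}$ with $|U'| = m-1$, and since $m \le k-1$ we have $|U'| = m-1 \le (k-1)-1$, so the inductive hypothesis applies to $G-u$ with parameter $k-1$.

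Applying the inductive hypothesis to $G-u$ gives $\gamma_{\times(k-1)}(G-u) = \gamma_{\times(k-1-|U'|)}((G-u)-U') + |U'|$. Now $(G-u) - U' = G - U$ and $k - 1 - |U'| = k - 1 - (m-1) = k - m = k - |U|$ and $|U'| = |U| - 1$, so this reads $\gamma_{\times(k-1)}(G-u) = \gamma_{\times(k-|U|)}(G-U) + |U| - 1$. Substituting into the identity from Proposition~\ref{masuno1} yields $\gamma_{\times k}(G) = \gamma_{\times(k-|U|)}(G-U) + |U| - 1 + 1 = \gamma_{\times(k-|U|)}(G-U) + |U|$, as desired. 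One should also check that the hypothesis $|U| \le k-1$ keeps all the subscripts meaningful along the way: at each stage the parameter stays at least $1$ until the very last application, so Proposition~\ref{masuno1} is always invoked legitimately.

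I do not anticipate a serious obstacle here; the only thing that needs care is the bookkeeping of which vertices remain universal after a deletion (the claim $U' = U \setminus \{u\}$ above) and making sure the parameter $k$ decreases in lockstep with $|U|$ so that $k - |U|$ is the same whether we strip off the universal vertices one at a time or all at once. An alternative, non-inductive route would be to strip off all of $U$ at once and argue directly that $D \subseteq V(G)$ is a $k$-tuple dominating set of $G$ containing $U$ if and only if $D \setminus U$ is a $(k-|U|)$-tuple dominating set of $G-U$ — using Remark~\ref{universal} repeatedly to force an optimal $D$ to contain all of $U$ — but the inductive packaging via Proposition~\ref{masuno1} is cleaner and reuses work already done.
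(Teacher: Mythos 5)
Your proof is correct and is exactly the argument the paper intends: the lemma is stated there without proof, as an immediate generalization of Proposition~\ref{masuno1} obtained by iterating that proposition, which is precisely your induction. The one detail you rightly single out --- that the set of universal vertices of $G-u$ is exactly $U\setminus\{u\}$, so that the parameter $k$ and the count of universal vertices decrease in lockstep --- is the only point requiring verification, and your argument for it is sound.
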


It is clear from Lemmas \ref{Umasdek} and \ref{masuno2} the following

\begin{corollary}
For a graph $G$ and $U\neq \emptyset$  the set of its  universal vertices,  if $\gamma_{\times i} (G)$ can be found in polynomial time for $i=1,2,3$, then $\gamma_{\times k} (G)$ can be found in polynomial time for every $k$ with $1 \leq k \leq |U|+3$.
\end{corollary}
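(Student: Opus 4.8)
\medskip
\noindent\textbf{Proof proposal.}
The plan is a two-case analysis on $k$ versus $|U|$, feeding Lemma~\ref{Umasdek} and Lemma~\ref{masuno2} the right inputs, after first noting that $U$ and $G-U$ are cheap to produce: checking whether a fixed vertex is universal is a single scan of the graph, so $U$, its cardinality $|U|$, and the induced subgraph $G-U$ are all obtained in polynomial (indeed linear) time. Then I would split as follows. If $1\le k\le |U|$, Lemma~\ref{Umasdek} applies and gives $\gamma_{\times k}(G)=k$ directly, so the answer is returned at once. If $|U|+1\le k\le |U|+3$, then $|U|\le k-1$, so Lemma~\ref{masuno2} yields $\gamma_{\times k}(G)=\gamma_{\times(k-|U|)}(G-U)+|U|$ with $k-|U|\in\{1,2,3\}$; hence computing $\gamma_{\times k}(G)$ reduces to a single evaluation of $\gamma_{\times i}$ for some $i\in\{1,2,3\}$ plus one addition. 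Since every $k$ with $1\le k\le|U|+3$ falls into one of the two regimes (and $U\neq\emptyset$ makes the second regime genuinely larger than the hypothesis), this would establish the claim.

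The one point that needs care --- and the place where the statement is really doing something --- is that the hypothesis provides a polynomial-time procedure for $\gamma_{\times i}$ on $G$, whereas the reduction above calls it on $G-U$. So I would spell out that the intended reading is over the graph class at hand, which is closed under vertex deletion: restricting a row permutation witnessing the C0P for columns of $M^*(G)$ to the rows surviving in $M^*(G-U)$ still places the $0$'s of each surviving column consecutively, so $G-U$ again has the C0P for columns and the assumed algorithm applies to it. I would also record the $+\infty$ bookkeeping: if $G-U$ has no $(k-|U|)$-tuple dominating set, then $G$ has no $k$-tuple dominating set, and both sides of the identity in Lemma~\ref{masuno2} equal $+\infty$, consistently with how that lemma is stated.

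I expect no further obstacle: the argument is essentially bookkeeping around the two previous lemmas, the arithmetic with $|U|$ is trivial, and the only substantive remark is the heredity of the class (or, alternatively, strengthening the hypothesis so that it refers to $G$ together with $G-U$). With these observations in place the corollary follows.
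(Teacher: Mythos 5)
Your proposal is correct and follows exactly the route the paper intends: the paper gives no explicit proof, stating only that the corollary is ``clear from Lemmas~\ref{Umasdek} and~\ref{masuno2}'', which is precisely your two-regime case split. Your additional remarks --- that the hypothesis must really be read over a class closed under deleting universal vertices (cf.\ the paper's own remark that $G-U$ is again a C0P-graph) and the $+\infty$ bookkeeping --- are sound and in fact make explicit points the paper leaves implicit.
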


\section{$k$-tuple domination and C0P-graphs}\label{seccion3}

Recall that a $0,1$-matrix has the  C0P for columns if there is a permutation of its rows that places the 0's consecutively in every column. 
We introduce the following definition:

\begin{definition}
A graph $G$ whose augmented adjancency matrix, $M^{*}(G)$, has the C0P for columns is called a {\emph{C0P}-graph}.
\end{definition}

\begin{remark} 
It is clear that if $G$ is a COP-graph then $G - U$ is a COP-graph.
\end{remark}

\medskip

Let $G$ be a C0P-graph with  its vertices  indexed so that the $0$'s occur consecutively in each column of $M^*(G)$. Let  $C_1$ be the set of columns whose  0's are below  the main diagonal, $C_2$ the set of columns whose  0's are above  the main diagonal, and $U$ the set of columns without 0's. Sets $C_1$, $C_2$ and $U$ partition $V(G)$, $G[C_1]$ and $G[C_2]$ are cliques in $G$ and $U$ is the set of universal vertices of $G$. We denote this partition by $(C_1, C_2, U)$, or simply $(C_1, C_2)$ when $U=\emptyset$. In the later case, $|C_1|\geq 2$ and $|C_2|\geq 2$.  Also for simplicity, we denote $G_1:=G[C_1]$ and $G_2:=G[C_2]$.

From now on, $G$ is a C0P-graph and $(C_1, C_2, U)$ is the above mentioned partition of $V(G)$.

\bigskip

It is easy to prove the following upper bound on the size of  a mimimun $k$-tuple dominating set of a C0P-graph:

\begin{lemma} \label{contenidouniversal}
Let $G$ be a C0P-graph and $k$ a positive integer. If $|C_i|\geq k$  for $i=1,2$, then $$\gamma_{\times  k}(G)\leq 2k.$$
\end{lemma}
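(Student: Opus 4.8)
The plan is to exhibit an explicit $k$-tuple dominating set of size $2k$. Since $|C_1|\geq k$ and $|C_2|\geq k$, I would choose an arbitrary $D_1\subseteq C_1$ with $|D_1|=k$ and an arbitrary $D_2\subseteq C_2$ with $|D_2|=k$, and put $D:=D_1\cup D_2$. Because $C_1$ and $C_2$ are two distinct parts of the partition $(C_1,C_2,U)$ of $V(G)$, they are disjoint, so $|D|=2k$.

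The second step is to check that $D$ is a $k$-tuple dominating set of $G$, handling the three types of vertices separately. If $v\in C_1$, then since $G[C_1]$ is a clique we have $C_1\subseteq N_G[v]$, hence $|N_G[v]\cap D|\geq |C_1\cap D|=|D_1|=k$; the case $v\in C_2$ is symmetric, using that $G[C_2]$ is a clique. If $v\in U$, then $v$ is universal, so $N_G[v]=V(G)\supseteq D$ and $|N_G[v]\cap D|=2k\geq k$. Thus every vertex of $G$ is $k$-dominated by $D$, and $\gamma_{\times k}(G)\leq |D|=2k$.

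I do not expect a genuine obstacle: the only facts used are that $G[C_1]$ and $G[C_2]$ are cliques and that $(C_1,C_2,U)$ partitions $V(G)$, both already recorded in the paragraph preceding the lemma. The single point worth stating carefully is \emph{why} the particular choice of $D_1$ and $D_2$ does not matter: it is precisely because each $v\in C_i$ has all of $C_i$ in its closed neighborhood, so any $k$ vertices of $C_i$ suffice to $k$-dominate $C_i$. As a minor sanity remark, the construction also certifies that $G$ admits a $k$-tuple dominating set at all, so $k\leq\delta(G)+1$ and the stated inequality is not vacuously about $+\infty$.
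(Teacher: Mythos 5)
Your proposal is correct and follows essentially the same approach as the paper: choose arbitrary $k$-subsets $D_1\subseteq C_1$ and $D_2\subseteq C_2$, and verify that $D_1\cup D_2$ is a $k$-tuple dominating set by the same case analysis on $v\in C_1$, $v\in C_2$, $v\in U$, using that each $G[C_i]$ is a clique. Your closing remark about feasibility (that the construction certifies $k\leq\delta(G)+1$) is a small, harmless addition not present in the paper.
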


\begin{proof}
Let $D_i\subseteq C_i$  with  $|D_i|=k$, for $i=1,2$ and consider the set $D_1\cup D_2$. Take  $v\in V(G)$. If $v\in C_i$, then $D_i \subseteq  N_G[v]$, thus  $|N_G[v] \cap (D_1\cup D_2)| \geq |D_i| =k$, for $i=1,2$. If $v\in U$, clearly $N_G[v]= C_1 \cup C_2$, thus  $|N_G[v] \cap (D_1\cup D_2)| = |D_1 \cup D_2| =2k \geq k$. Thus $D_1\cup D_2$ is a $k$-tuple dominating set of $G$ and the upper bound follows.
\qed\end{proof}

\bigskip

Lemmas \ref{masuno2} and  \ref{contenidouniversal}  allow us to restrict our study of  C0P-graphs to those  with partition $(C_1, C_2, U)$, where $U=\emptyset$ and $C_1$ and $C_2$ are nonempty sets. Under these assumptions, we have $k+1 \leq\gamma_{ k}(G)\leq 2k$, for any C0P-graph $G$.

For a given C0P-graph $G$ with partition $(C_1, C_2)$, let us denote $V(G)=\{v_1,v_2,\cdots,v_n\}$, $C_1=\{v_1,v_2,\cdots,v_r\}$ and 
$C_2=\{v_{r+1},v_{r+2},\cdots,v_n\}$.
Also let us denote by $M^*_{C_iC_j}$,  the submatrix of $M^{*}(G)$ with rows indexed by $C_i$ and columns by $C_j$. Notice that $M^*_{C_1C_1}=M^*_{C_2C_2}=J$.

\begin{figure}[ht]\label{esquema}
\begin{center}
\includegraphics[scale=1.2]{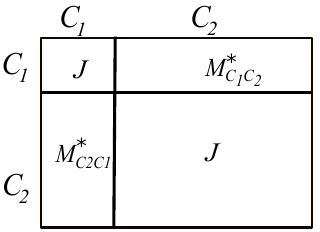}

\caption{Augmented adjacency matrix $M^*(G)$ of a C0P-graph $G$ with $U=\emptyset$.}
\end{center}
\end{figure}

\subsection{Construction of auxiliary interval graphs $H_i$}

Let $G$ be a C0P-graph and $(C_1, C_2)$  the above mentioned partition of $V(G)$.

We construct two interval graphs $H_1$ and $H_2$ with $V(H_i)=C_i$ for $i=1,2$ in the following way: 

\begin{itemize}
\item for each vertex $v_i\in C_1$,  define an interval $I_i$ from $\left[r+1,n\right]_{\mathbb{N}}$ such that, if the  consecutive $0$'s of column $v_i$ correspond to the vertices $v_p,...,v_{p+s}$ where $p\geq r+1$ and  $p+s\leq n$, then $I_i=\left[p,p+s\right]_{\mathbb{N}}$.

\item for each vertex $v_i \in C_2$,  define an interval $I_i$ from $\left[1,r\right]_{\mathbb{N}}$ such that, if the consecutive $0$'s of column $v_i$ correspond to the vertices $v_p,...,v_{p+s}$ with $p\geq 1$ and $p+s\leq r$, then $I_i=\left[p,p+s\right]_{\mathbb{N}}$. 
\end{itemize}

We will say that vertex $v_i$ is represented by the interval $I_i$, $\forall i=1,...,n$. 

The two interval graphs $H_1$ and $H_2$ constructed as above have interval models $\mathcal{I}_1=\left\{I_1,I_2,...,I_r\right\}$ and $\mathcal{I}_2=\left\{I_{r+1},I_{r+2},...,I_n\right\}$, respectively.

\bigskip
\begin{figure}[ht]\label{auxiliary}
\centering
\includegraphics[scale=1.2]{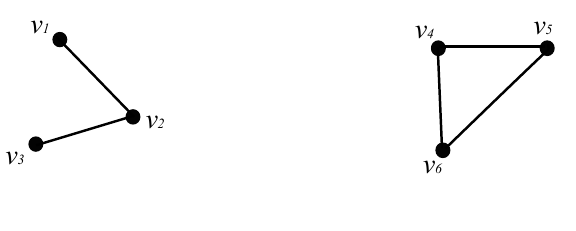}
\caption{Graphs $H_1$ and $H_2$ related to graph $G$ of Figure \ref{ejemplo1}.}
\end{figure}

\bigskip

\begin{remark} For a C0P-graph with partition $(C_1, C_2, U)$ with $U \neq \emptyset$, graphs $H_1$ and $H_2$ are defined as above from the subgraph $G-U$ of $G$.
\end{remark}

It is clear that given two intersecting intervals $I_i$ and $I_j$ of $H_1$ for  $1\leq i \neq j \leq p$, there exists  $q$ with $p+1\leq q\leq n$ such that $m^*_{qi}=m^*_{qj}=0$. This means that $v_qv_i\notin E(G)$ and $v_qv_j\notin E(G)$. 
In other words, given two non intersecting intervals $I_i$ and $I_j$ of $H_1$  for  $1\leq i \neq j \leq p$, we have $m^*_{qi}=1$ or $m^*_{qj}=1$  for all $q$ with $p+1\leq q\leq n$. Therefore in  each file of $M^*_{C_2C_1}$ there exist at least one  $1$ in the columns corresponding to vertex  $v_i$ or $v_j$ and then $v_qv_i\in E(G)$ or $v_q v_j\in E(G)$  for all $q$ with $p+1\leq q\leq n$. 

In a similar way, the above argument clearly holds  for the interval graph $H_2$.

\bigskip
\subsection{Stables sets of $H_i$ and tuple dominating sets of $G$}

We will denote by $\alpha_i$ the stability number of the inteval graphs $H_i$ constructed as in the previous subsection, for $i=1,2$. Let us remark that the stability number of an interval graph can be found in linear time \cite{gavril}.

The following fact easily follows:

\begin{lemma}\label{alosumo}  Let $G$ be a C0P-graph with partition $(C_1,C_2)$, $S\subseteq C_j$ and $t$ a positive integer such that $S$ $t$-dominates $G_i$  for $i\neq j$. Then $ |S|  \geq t+1$.
\end{lemma}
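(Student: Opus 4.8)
The plan is to argue by contradiction, using the two structural facts that the notation ``partition $(C_1,C_2)$'' silently carries: that $U=\emptyset$, and that $G[C_1]$ and $G[C_2]$ are cliques with $|C_1|,|C_2|\ge 2$. Since the definition of ``$t$-dominates'' already requires $t\le|S|$, the bound $|S|\ge t$ comes for free, so the whole content of the lemma is to rule out the case $|S|=t$.

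So I would assume $|S|=t$ and show that $S$ must then contain a universal vertex, contradicting $U=\emptyset$. Concretely: fix a vertex $v\in C_i$ — such a vertex exists because $|C_i|\ge 2$ — and observe that $N_G[v]\cap S\subseteq S$ together with the domination hypothesis $|N_G[v]\cap S|\ge t=|S|$ forces $N_G[v]\cap S=S$, i.e.\ $S\subseteq N_G[v]$. Since $v\in C_i$, $i\ne j$, and $S\subseteq C_j$, we have $v\notin S$, hence $S\subseteq N_G(v)$. As $v\in C_i$ was arbitrary, this shows every vertex of $S$ is adjacent to every vertex of $C_i$.

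Next I would pick any $s\in S$, which is legitimate since $|S|=t\ge 1$. By the previous step $s$ is adjacent to every vertex of $C_i$; and since $s\in C_j$ and $G[C_j]$ is a clique, $s$ is also adjacent to every other vertex of $C_j$. Because $\{i,j\}=\{1,2\}$ and $U=\emptyset$, this yields $N_G[s]=C_1\cup C_2=V(G)$, so $s$ is a universal vertex of $G$ — contradicting $U=\emptyset$. Therefore $|S|\ge t+1$, as claimed.

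I do not anticipate a genuine obstacle; the argument is essentially immediate. The only point that needs a little care is interpretational: ``$S$ $t$-dominates $G_i$'' must be read as $|N_G[v]\cap S|\ge t$ for every $v\in V(G_i)$, i.e.\ domination counted in the ambient graph $G$ (the literal reading of the earlier definition would be vacuous here, since $S\subseteq C_j$ is disjoint from $V(G_i)=C_i$), and one must remember that the hypothesis ``partition $(C_1,C_2)$'' encodes $U=\emptyset$ and $|C_1|,|C_2|\ge 2$, both of which are invoked above.
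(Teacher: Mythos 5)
Your proof is correct and rests on the same structural fact as the paper's: since $U=\emptyset$, no vertex of $C_j$ can be adjacent to all of $C_i$ (the paper phrases this as ``each vertex has a non-neighbour in the opposite clique'' and picks that non-neighbour directly, while you run the contrapositive and derive a universal vertex from $|S|=t$). The extra care you take about reading ``$S$ $t$-dominates $G_i$'' with neighbourhoods in the ambient graph $G$ is a fair and correct reading of the paper's intent.
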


\begin{proof}
Since $U=\emptyset$, the proof easily follows from the fact that for each vertex $v\in C_i$, there is a non adjacent vertex $w\in C_j$, for $i\neq j$ and $i=1,2$. 
\qed\end{proof}

It is straigthforward that any subset $S\subseteq C_i$ $|S|$-dominates $G_i$, for each $i=1,2$ and  at most $(|S|-1)$-dominates the whole graph $G$. When considering stable sets of $H_i$, the interesting fact is the following, which will be the key of the results in the next section: 

\begin{proposition}\label{estable} Let $G$ be a C0P-graph with partition $(C_1,C_2)$ and $S\subseteq {C_i}$, with $i=1, 2$. Then $S$  $(|S|-1)$-dominates $G_j$  ($i\neq j$) if and only if $S$ is a stable set of $H_i$. 
\end{proposition}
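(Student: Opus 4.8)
The plan is to prove both implications by translating adjacency in $G$ between $C_i$ and $C_j$ into the geometry of the interval model $\mathcal{I}_i$ of $H_i$. Fix $i$ and $j$ with $\{i,j\}=\{1,2\}$, $i\ne j$; without loss of generality take $i=1$, $j=2$, so $C_1=\{v_1,\dots,v_r\}$, $C_2=\{v_{r+1},\dots,v_n\}$, and each $v_\ell\in C_1$ is represented by the interval $I_\ell\subseteq[r+1,n]_{\mathbb N}$ recording exactly the rows (indexed by $C_2$) in which column $v_\ell$ has a $0$. The single computation underlying everything is: for $v_q\in C_2$ and $v_\ell\in C_1$, $v_qv_\ell\notin E(G)$ iff $m^*_{q\ell}=0$ iff $q\in I_\ell$. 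Hence, for any $S\subseteq C_1$ and any $v_q\in C_2$, the number of neighbours of $v_q$ in $S$ is $|S|$ minus the number of intervals of $\{I_\ell:v_\ell\in S\}$ that contain the point $q$; since $v_q\notin S$, $N_G[v_q]\cap S = N_G(v_q)\cap S$, so $|N_G[v_q]\cap S|=|S|-|\{v_\ell\in S: q\in I_\ell\}|$.

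For the forward direction, suppose $S$ is \emph{not} a stable set of $H_1$; then two of its intervals, say $I_{\ell}$ and $I_{\ell'}$ with $v_\ell,v_{\ell'}\in S$, intersect, so there is a point $q\in[r+1,n]_{\mathbb N}$ with $q\in I_\ell\cap I_{\ell'}$. By the displayed identity, $|N_G[v_q]\cap S|\le |S|-2<|S|-1$, so $S$ fails to $(|S|-1)$-dominate $G_2$. Contrapositively, if $S$ $(|S|-1)$-dominates $G_2$ then $S$ is stable in $H_1$. For the converse, suppose $S$ is a stable set of $H_1$, i.e.\ the intervals $\{I_\ell:v_\ell\in S\}$ are pairwise disjoint. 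Then for every $v_q\in C_2$ at most one of these intervals contains $q$ (here I will invoke the paragraph already in the excerpt: pairwise non-intersecting intervals of $H_1$ guarantee that in each row of $M^*_{C_2C_1}$ at most one of the relevant columns carries a $0$), hence $|\{v_\ell\in S:q\in I_\ell\}|\le 1$ and $|N_G[v_q]\cap S|\ge |S|-1$. Since this holds for all $v_q\in V(G_2)=C_2$, $S$ indeed $(|S|-1)$-dominates $G_2$.

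I do not expect a genuine obstacle here: the statement is essentially a bookkeeping identity once the interval model is unwound, and the only subtlety is keeping the indices straight (which columns index $M^*_{C_2C_1}$, and that the diagonal $1$'s of $M^*(G)$ are irrelevant because $v_q\notin S\subseteq C_1$ when $v_q\in C_2$). The mild point worth stating explicitly is that ``$(|S|-1)$-dominates $G_j$'' is a condition only on vertices of $G_j$, so we never need to check vertices of $C_i$ itself — consistent with the earlier remark that $S\subseteq C_i$ automatically $|S|$-dominates $G_i$. One should also note the degenerate case $|S|\le 1$, where both sides hold trivially ($S$ is stable, and $0$-domination is vacuous), so the equivalence is not vacuously broken at the boundary.
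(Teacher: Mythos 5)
Your proof is correct and follows essentially the same route as the paper's: both reduce the $(|S|-1)$-domination condition to the statement that each row of $M^*_{C_jC_i}$ has at most one $0$ in the columns of $S$, i.e.\ that each point of the opposite side lies in at most one interval of $\{I_\ell : v_\ell\in S\}$, which is exactly stability in $H_i$. Your write-up is somewhat more careful (you make the identity $|N_G[v_q]\cap S|=|S|-|\{v_\ell\in S: q\in I_\ell\}|$ explicit and correctly take neighbourhoods in $G$ rather than in $G_j$, where the paper's notation $N_{G_j}[v]\cap S$ is slightly loose since $S\subseteq C_i$), but the underlying argument is the same.
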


\begin{proof}
 $S$ is an $(|S|-1)$-tuple dominating set of  $G_j$ if and only if for  every vertex $v\in C_j$, $|N_{G_j}[v]\cap S|\geq |S|-1$. In other words, $S$ is an $(|S|-1)$-tuple dominating set of  $G_j$ if and only if  for each file of $M^*_{C_jC_i}$ there exists at most one zero in  the columns  corresponding to  vertices in $S$. This is equivalent to say that the set of intervals $\{I_t\}_{t:v_t\in S}$ are pairwise non-adjacent, i.e $S$ is a stable set of $H_i$.

\qed\end{proof}

The relationship bewtween tuple dominating sets of $G$ and stable sets of the auxiliary interval graphs $H_1$ and $H_2$ allow us to solve the $2$, $3$-tuple domination problems on C0P-graphs.

\section{ $2$- and $3$-tuple domination for C0P-graphs}\label{seccion4}

\subsection{$2$-tuple domination}

\begin{theorem}\label{2tuple}  Let $G$ be a $C0P$-graph with  partition $(C_1, C_2, U)$  and graphs $H_i$ defined as in the previous section, for $i=1,2$. 
\begin{enumerate}
\item[1)] If $|U|=1$, then $\gamma_{\times 2}(G)=  3$.
\item[2)] If $|U|\geq 2$, then $\gamma_{\times 2}(G)=2$.
\item[3)] If $|U|=0$ and $\alpha_1+\alpha_2\geq 3$  then $\gamma_{\times 2}(G)=3$.
\item[4)] If $|U|=0$ and  $\alpha_1=\alpha_2=1$ then $\gamma_{\times 2}(G)=4$.
\end{enumerate}
\end{theorem}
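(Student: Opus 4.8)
The plan is to dispatch the four cases using the machinery already assembled. Cases 1) and 2) follow immediately from the general lemmas on universal vertices. For 2), $|U|\ge 2$ means $|U|\ge k$ with $k=2$, so Lemma~\ref{Umasdek} gives $\gamma_{\times 2}(G)=2$ at once. For 1), $|U|=1$ forces $\gamma_{\times 2}(G)\ge 3$ by Lemma~\ref{Umasdek} (since $|U|<2$), while Proposition~\ref{masuno1} applied to the single universal vertex $u$ gives $\gamma_{\times 2}(G)=\gamma_{\times 1}(G-u)+1=\gamma(G-u)+1$; it then suffices to observe $\gamma(G-u)\ge 2$, which holds because $G-u$ is a C0P-graph with $U=\emptyset$ and partition $(C_1,C_2)$ with both parts nonempty, so no single vertex can dominate it (a vertex of $C_1$ misses some vertex of $C_2$ and vice versa — the same observation underlying Lemma~\ref{alosumo}). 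Hence $\gamma_{\times 2}(G)=3$.

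The substance is in cases 3) and 4), where $U=\emptyset$. Here the standing bounds $k+1\le\gamma_{\times k}(G)\le 2k$ give $3\le\gamma_{\times 2}(G)\le 4$, so in each case I only need to decide between the two values. For case 4), $\alpha_1=\alpha_2=1$: I claim $\gamma_{\times 2}(G)=4$, i.e.\ that no $3$-element set $2$-dominates $G$. Suppose $D$ with $|D|=3$ is a $2$-tuple dominating set. Since $V(G)=C_1\cup C_2$, some $C_i$ contains at least two vertices of $D$; say $|D\cap C_1|\ge 2$. Writing $S=D\cap C_1$, every $v\in C_2$ satisfies $|N_G[v]\cap D|\ge 2$, and $|D\setminus S|=|D\cap C_2|\le 1$, so $|N_{G_1\cup \text{cross}}\dots|$— more carefully: each $v\in C_2$ must get at least $2-|D\cap C_2|\ge 1$ of its dominators from $S$, but in fact I will show $S$ must $(|S|-1)$-dominate $G_2$, forcing $S$ to be a stable set of $H_1$ by Proposition~\ref{estable}, contradicting $\alpha_1=1$ once $|S|\ge 2$. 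To make the "must $(|S|-1)$-dominate" step rigorous I will split on whether $|D\cap C_2|=0$ or $1$ and on the sizes, using that a vertex of $C_2$ is adjacent to \emph{every} vertex of $C_2\cap D$, so the deficit it can only cover from $C_1\cap D$; a short case check pins down that $S$ leaves at most one non-neighbour in each row, which is exactly the stability condition. The symmetric argument handles $|D\cap C_2|\ge 2$. Combined with the upper bound $\gamma_{\times 2}(G)\le 4$ from Lemma~\ref{contenidouniversal} (valid since $|C_i|\ge 2$), this gives equality.

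For case 3), $\alpha_1+\alpha_2\ge 3$: I must exhibit a $3$-element $2$-tuple dominating set. Without loss of generality $\alpha_1\ge 2$, so $H_1$ has a stable set $S$ of size $2$; by Proposition~\ref{estable}, $S\subseteq C_1$ with $|S|=2$ already $1$-dominates $G_2$, and being a subset of the clique $C_1$ it $2$-dominates $G_1$. Adding any single vertex $w\in C_2$: then every $v\in C_1$ sees $S\subseteq N_G[v]$ (two dominators), every $v\in C_2$ sees $w$ (if $v=w$ or $v$ adjacent to $w$) plus at least one vertex of $S$; I need to check $v$ gets \emph{two} dominators. Vertices of $C_2$ adjacent to $w$ are fine; for $v\in C_2$ non-adjacent to $w$, $v\ne w$ so $v$ is dominated only through $S$, and $S$ only $1$-dominates $G_2$ in general, which is not enough. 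So the naive choice fails and I instead choose $w$ more carefully, or use a stable set of size $3$ spread across both sides; concretely, if $\alpha_1\ge 3$ take a stable triple in $H_1$ and argue directly, and if $\alpha_1=\alpha_2=2$ pick one stable pair $\{a,b\}\subseteq C_1$ from $H_1$ and augment with a vertex $c\in C_2$ chosen so that $\{a,b\}$ restricted to the row of each $C_2$-vertex non-adjacent to $c$ has no zero — such $c$ exists precisely because... this is the delicate point. I expect \textbf{this existence argument in case 3) to be the main obstacle}: one has to use the interval structure of $H_1,H_2$ (and the observation recorded after the construction of $H_i$, that non-intersecting intervals cover every row of the cross-matrix) to show the three chosen vertices can be coordinated so that every vertex of $G$ accrues two dominators. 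The likely clean route is: take a maximum stable set witnessing $\alpha_1+\alpha_2\ge 3$, say with $a$ vertices in $H_1$ and $b$ in $H_2$, $a+b=3$; the corresponding vertex set $D$ has $|D|=3$, within each $C_i$ it sits in a clique so $2$-dominates $G_i$ whenever $|D\cap C_i|\ge 2$, and when $|D\cap C_i|=1$ the two vertices on the other side form a stable pair of $H_j$ hence $1$-dominate $G_i$ and the lone vertex of $D\cap C_i$ supplies the second dominator via reflexivity/adjacency within its own row — checking this last adjacency for every vertex is the routine-but-necessary verification. Once $D$ is shown to $2$-dominate $G$, $\gamma_{\times 2}(G)\le 3$, and with the lower bound $\gamma_{\times 2}(G)\ge 3$ we get equality, completing the proof. $\qed$
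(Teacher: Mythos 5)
Your final route for case 3) is exactly the paper's proof, so let me first dissolve the ``delicate point'' you flagged: it does not exist. You worried about a vertex $v\in C_2$ non-adjacent to the chosen $w\in C_2$, but $G[C_2]$ is a clique, so every $v\in C_2$ has $w\in N_G[v]$; the ``naive'' choice already works. Stated uniformly (as the paper does): pick stable sets $S_i$ of $H_i$ with $|S_1|+|S_2|=3$, which is possible since $\alpha_1+\alpha_2\ge 3$ (singletons and the empty set are stable). Each $v\in C_i$ collects $|S_i|$ dominators from its own side (clique) and, by Proposition~\ref{estable}, at least $|S_j|-1$ from the other side, for a total of $|S_1|+|S_2|-1=2$. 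No coordination among the three chosen vertices is needed beyond stability within each $H_i$, and this accounting also covers the $|D\cap C_i|=0$ subcase you left implicit. Your case 4) is a valid variant: the paper instead observes that $\alpha_i=1$ forces (by the Helly property of the intervals of $H_i$) a vertex of $C_j$ with no neighbour in $C_i$, whence $|D\cap C_j|\ge 2$ for $j=1,2$ directly; your contradiction via Proposition~\ref{estable} reaches the same bound $\gamma_{\times 2}(G)\ge 4$, and both close with Lemma~\ref{contenidouniversal}. One genuine slip to repair: in case 1) you say it ``suffices to observe $\gamma(G-u)\ge 2$'', but that inequality only reproduces the lower bound you already have from Lemma~\ref{Umasdek}; what Proposition~\ref{masuno1} actually requires is the upper bound $\gamma(G-u)\le 2$, which holds because one vertex from each of the cliques $C_1$ and $C_2$ dominates $G-u$. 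With that, $\gamma_{\times 2}(G)=\gamma(G-u)+1=3$.
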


\begin{proof} 
\begin{enumerate}
\item[1)] Follows from Proposition \ref{masuno1}. 
\item[2)]  Follows from Lemma \ref{Umasdek}.
\item[3)] Suppose $|U|=0$ and $\alpha_1+\alpha_2\geq 3$ and  let $S_1$ and $S_2$ be stable sets of $H_1$ and $H_2$ respectively with $|S_1\cup S_2|=3$. Clearly, $S_i$ $|S_i|$-dominates $G_i$ and also $(|S_i|-1)$-dominates $G_j$  for each $i=1,2$ and $i\neq j$. Thus  $S_1\cup S_2$ is a $2$-tuple dominating set of $G$ and then $\gamma_{\times k}(G) \leq 3$. But  $\gamma_{\times k}(G) > 2$ from Lemma \ref{Umasdek} and thus the result follows. 

\item[4)] Suppose  $|U|=0$ and  $\alpha_1=\alpha_2=1$. Then  $|D\cap C_j|\geq 2$  for $j=1,2$ and every $2$-tuple dominating set $D$ of  $G$.   Thus  $\gamma_{\times  2}(G) \geq 4$. The result then follows from Lemma \ref{contenidouniversal}.
\end{enumerate}
\qed\end{proof}

\begin{corollary}
The 2-tuple domination problem can be solved in linear time on C0P-graphs.
\end{corollary}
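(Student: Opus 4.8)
The plan is to verify that every ingredient appearing in Theorem~\ref{2tuple} can be computed in linear time, so that the four-case analysis in that theorem yields a linear-time algorithm. First I would recall that, by Fulkerson and Gross~\cite{fulkerson}, one can test in linear time whether $M^*(G)$ has the C0P for columns and, if so, produce a row permutation placing the $0$'s consecutively in each column; from such a permutation the partition $(C_1,C_2,U)$ and the indexing of $V(G)$ are read off directly in linear time, simply by scanning each column to locate its block of consecutive $0$'s (below the diagonal, above it, or absent). In particular $|U|$ is obtained in linear time, which immediately resolves cases 1) and 2) of Theorem~\ref{2tuple} via Proposition~\ref{masuno1} and Lemma~\ref{Umasdek}.

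Next I would address the case $U=\emptyset$. From the same row permutation, the interval models $\mathcal{I}_1$ and $\mathcal{I}_2$ of the auxiliary graphs $H_1$ and $H_2$ are constructed in linear time: for each column $v_i$ one records the endpoints $[p,p+s]_{\mathbb N}$ of its block of $0$'s, which is one pass over the matrix. By Gavril~\cite{gavril}, the stability numbers $\alpha_1$ and $\alpha_2$ of these interval graphs are computable in linear time (given the interval models, which we already have). Comparing $\alpha_1+\alpha_2$ with $3$, resp.\ checking whether $\alpha_1=\alpha_2=1$, then distinguishes cases 3) and 4) and, by Theorem~\ref{2tuple}, returns the exact value of $\gamma_{\times 2}(G)$. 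Summing over connected components (each of which is itself a C0P-graph, or rather the whole analysis is applied componentwise) keeps the total linear.

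I should also note a subtlety: the general C0P-graph has $U$ possibly nonempty \emph{and} possibly $C_i=\emptyset$ for some $i$ after removing $U$; but Theorem~\ref{2tuple} as stated already accounts for $|U|\ge 1$ purely through $|U|$, and the bound $k\le\delta(G)+1$ (needed for $\gamma_{\times 2}$ to be finite) is checked in linear time by computing $\delta(G)$. If $\delta(G)+1<2$, i.e.\ $G$ has an isolated vertex, we report $+\infty$; otherwise one of the four cases applies. Assembling these observations gives the algorithm.

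The main obstacle I anticipate is essentially bookkeeping rather than mathematics: one must be careful that "linear time" is measured in the size of the input (which here is the $n\times n$ matrix $M^*(G)$, so $\Theta(n^2)$, or the graph with $n$ vertices and $m$ edges), and that each cited subroutine — the Fulkerson–Gross C0P test, and Gavril's maximum-independent-set algorithm for interval graphs — indeed runs within that budget on the representation we are handed, including the cost of converting between the matrix form and the interval models. Once the input model is fixed and these citations are invoked, the proof is short; I would present it as: compute the partition and $|U|$; if $|U|\ge 1$ apply cases 1)–2); else build $H_1,H_2$, compute $\alpha_1,\alpha_2$, apply cases 3)–4); each step is linear, hence so is the whole procedure. \qed
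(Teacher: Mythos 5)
Your proposal is correct and follows essentially the same route as the paper, which simply observes that the case analysis of Theorem~\ref{2tuple} reduces the computation to finding the stability numbers of the interval graphs $H_1$ and $H_2$, a linear-time task. Your version merely spells out the preprocessing steps (the Fulkerson--Gross test, extraction of the partition $(C_1,C_2,U)$, and construction of the interval models) that the paper leaves implicit.
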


\begin{proof} Follows from the fact that finding the stability  number of an interval graph is linear.
\qed\end{proof}

\subsection{$3$-tuple domination}

\begin{theorem}\label{3tuple}  Let $G$ be a $C0P$-graph with  partition $(C_1, C_2, U)$  and graphs $H_i$ defined as in the previous section, for $i=1,2$. 

\begin{enumerate}

\item[i.] If $|U|=1$, then $\gamma_{\times 3}(G)= 4$ if $\alpha_1+\alpha_2\geq 3$, and $\gamma_{\times 3}(G)= 5$ if $\alpha_1+\alpha_2=2$.
\item[ii.] If $|U|=2$, then $\gamma_{\times 3}(G)=4$.
\item[iii.] If $|U|=3$, then $\gamma_{\times 3}(G)= 3$.
\item[iv.] If $|U|=0$ and $\alpha_1+\alpha_2\geq 4 $ then $\gamma_{\times 3}(G)=4$.
\item[v.] If $|U|=0$ and $\alpha_1=\alpha_2=1$ then $\gamma_{\times 3}(G)=6$.
\item[vi.] If $|U|=0$ and $\alpha_1+ \alpha_2=3$ then $\gamma_{\times 3}(G)=5$.
\end{enumerate}
\end{theorem}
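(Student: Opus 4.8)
The plan is to prove Theorem~\ref{3tuple} case by case, reducing the situations with $|U|\geq 1$ to the $U=\emptyset$ case via Proposition~\ref{masuno1} (applied repeatedly) together with Theorem~\ref{2tuple}, and then handling the four remaining subcases with $U=\emptyset$ directly using the stable-set/tuple-domination dictionary of Proposition~\ref{estable} and the bounds of Lemma~\ref{alosumo}, Lemma~\ref{contenidouniversal} and Lemma~\ref{Umasdek}.

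First I would dispose of the cases with universal vertices. For \emph{iii.} ($|U|=3$) the equality $\gamma_{\times 3}(G)=3$ is immediate from Lemma~\ref{Umasdek}. For \emph{ii.} ($|U|=2$), Proposition~\ref{masuno1} gives $\gamma_{\times 3}(G)=\gamma_{\times 2}(G-u)+1$ where $G-u$ is a C0P-graph with exactly one universal vertex, so by Theorem~\ref{2tuple}(1) $\gamma_{\times 2}(G-u)=3$, hence $\gamma_{\times 3}(G)=4$. For \emph{i.} ($|U|=1$), again $\gamma_{\times 3}(G)=\gamma_{\times 2}(G-u)+1$ where $G-u$ is a C0P-graph with $U(G-u)=\emptyset$ and the same auxiliary interval graphs $H_1,H_2$ (so the same $\alpha_1,\alpha_2$); applying Theorem~\ref{2tuple}(3)--(4) gives $\gamma_{\times 2}(G-u)=3$ when $\alpha_1+\alpha_2\geq 3$ and $\gamma_{\times 2}(G-u)=4$ when $\alpha_1=\alpha_2=1$, i.e. when $\alpha_1+\alpha_2=2$; adding $1$ yields the stated values $4$ and $5$.

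Next I would treat the three subcases with $U=\emptyset$. In each we know $4=k+1\leq\gamma_{\times 3}(G)\leq 2k=6$. For \emph{v.} ($\alpha_1=\alpha_2=1$): any $3$-tuple dominating set $D$ must, by Lemma~\ref{alosumo} and the observation that a subset of $C_i$ of size $s$ can at most $(s-1)$-dominate across, together with $\alpha_i=1$, force $|D\cap C_i|\geq 3$ for both $i$; more carefully, since the largest stable set of $H_i$ has size $1$, any set $S\subseteq C_i$ with $|S|\geq 2$ fails to $(|S|-1)$-dominate $G_j$, so $D\cap C_i$ cannot help dominate $G_j$ beyond what a single vertex does — an analysis of how many vertices of $D$ lie in each part shows $|D|\geq 6$; matching upper bound $6$ comes from Lemma~\ref{contenidouniversal} with $k=3$. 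For \emph{iv.} ($\alpha_1+\alpha_2\geq 4$): pick stable sets $S_1\subseteq C_1$, $S_2\subseteq C_2$ of $H_1,H_2$ with $|S_1\cup S_2|=4$; by Proposition~\ref{estable} each $S_i$ $(|S_i|-1)$-dominates $G_j$, and $S_i$ trivially $|S_i|$-dominates $G_i$, so a counting check over $v\in C_1$ and $v\in C_2$ shows $S_1\cup S_2$ is a $3$-tuple dominating set, giving $\gamma_{\times 3}(G)\leq 4$; with the general lower bound $4$ this is tight. For \emph{vi.} ($\alpha_1+\alpha_2=3$): the upper bound $\gamma_{\times 3}(G)\leq 5$ comes from taking a stable set $S_1$ of $H_1$ and $S_2$ of $H_2$ with $|S_1\cup S_2|=3$ and adjoining one further arbitrary vertex in whichever $C_i$ has $\alpha_i=1$, then checking the domination counts; the lower bound $\gamma_{\times 3}(G)\geq 5$ requires ruling out a $3$-tuple dominating set $D$ of size $4$ by showing that with $\alpha_1+\alpha_2=3$ (so, up to symmetry, $\alpha_1=2,\alpha_2=1$) no distribution of $4$ vertices among $C_1,C_2$ simultaneously $3$-dominates all of $C_1$ and all of $C_2$, using Proposition~\ref{estable} to bound how much cross-domination a subset of $C_i$ can achieve.

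The main obstacle I anticipate is the lower-bound direction in subcases \emph{v.} and \emph{vi.}: unlike the upper bounds, which follow cleanly from the construction principle ``stable set of $H_i$ of size $s$ gives a set that $(s-1)$-dominates across and $s$-dominates within,'' the lower bounds require a genuine case analysis on the sizes $|D\cap C_1|$ and $|D\cap C_2|$ for an arbitrary candidate dominating set $D$, invoking $\alpha_i$ to cap the cross-domination contribution of each part (a size-$m$ subset of $C_i$ can $3$-dominate $G_j$ only if it contains a stable set of $H_i$ of size $\geq 4$, impossible when $\alpha_i$ is small, and otherwise contributes at most $\alpha_i$ — or $|D\cap C_i|-1$ if that is smaller — toward the count at vertices of $C_j$). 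Making this bookkeeping precise, and checking that every partition $|D\cap C_1|+|D\cap C_2|$ summing below the claimed value leaves some vertex under-dominated, is the delicate part; everything else reduces to the already-established lemmas.
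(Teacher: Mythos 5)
Your overall architecture coincides with the paper's: items i--iii by peeling off universal vertices (the paper invokes Lemma~\ref{masuno2} for ii where you iterate Proposition~\ref{masuno1}, which amounts to the same thing), item iv by taking stable sets $S_1,S_2$ with $|S_1\cup S_2|=4$ and applying Proposition~\ref{estable}, and the lower bounds in v and vi by analysing $|D\cap C_1|$ and $|D\cap C_2|$. However, the upper bound in case vi is broken as you describe it. The set you build --- $S_1\cup S_2$ with $|S_1\cup S_2|=3$ plus \emph{one} further vertex --- has only $4$ elements, so if it were a $3$-tuple dominating set you would have shown $\gamma_{\times 3}(G)\le 4$, contradicting the value $5$ you are trying to prove (and your own lower-bound argument). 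And indeed it never dominates: with $\alpha_2=1$, all intervals of $\mathcal{I}_2$ pairwise intersect, hence by the Helly property for intervals they share a common point, i.e.\ some $x\in C_1$ is adjacent to \emph{no} vertex of $C_2$; that $x$ needs three dominators inside $C_1$, while your set (placing the extra vertex on the side with $\alpha_i=1$, namely $C_2$) leaves only $|S_1|=2$ vertices in $C_1$. The paper's construction adjoins \emph{two} vertices, $w_1\in C_1\setminus S_1$ and $w_2\in C_2\setminus S_2$, so that $C_1$ carries three vertices of the set and $C_2$ two; vertices of $C_1$ are then covered by the clique $C_1$ alone, and vertices of $C_2$ get two from $C_2$ plus at least one from $S_1$, since a stable set of size $2$ in $H_1$ $1$-dominates $G_2$ by Proposition~\ref{estable}.

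A lesser issue: in case v, your justification that $D\cap C_i$ ``cannot help dominate $G_j$ beyond what a single vertex does'' does not follow from Proposition~\ref{estable} --- failing to $(|S|-1)$-dominate only says that \emph{some} vertex of $C_j$ misses at least two elements of $S$, not that every vertex receives at most one. The fact you actually need (the paper isolates it as Proposition~\ref{alfas}(1)) is again the Helly argument: $\alpha_i=1$ produces a vertex of $C_j$ with no neighbour in $C_i$, whence $|D\cap C_j|\ge 3$ for both $j$ and $|D|\ge 6$. By contrast, your lower-bound sketch for vi is sound and in fact tidier than the paper's bookkeeping with the quantities $t_i$: if $|D|=4$ and $d_i=|D\cap C_i|$, every $x\in C_i$ needs $|N[x]\cap D_j|\ge 3-d_i=d_j-1$, so by Proposition~\ref{estable} each $D_j$ is a stable set of $H_j$, giving $4=d_1+d_2\le\alpha_1+\alpha_2=3$, a contradiction.
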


\begin{proof}   
\begin{enumerate}
\item[i.] Follows from Proposition \ref{masuno1} and items 3 and 4 of  Theorem \ref{2tuple}.
\item[ii.] Follows from Lemma \ref{masuno2}.
\item[iii.] Follows from Lemma \ref{Umasdek}.
\item[iv.)] Suppose $|U|=0$ and $\alpha_1 +\alpha_2\geq 4$ and  let $S_1$ and $S_2$ be stable sets of $H_1$ and $H_2$ respectively, with $|S_1\cup S_2|=4$. Clearly, $S_i$ $|S_i|$-dominates $G_i$ and also $(|S_i|-1)$-dominates $G_j$ for $i\neq j$ and $i=1,2$. Thus  $S_1\cup S_2$ is a $3$-tuple dominating set of $G$ implying  $\gamma_{\times 3}(G) \leq 4$. But  $\gamma_{\times 3}(G) > 3$ from Lemma \ref{Umasdek} concluding 
$\gamma_{\times k}(G) = 4$. 
\item[v.)] Suppose $|U|=0$ and $\alpha_1=\alpha_2=1$. Then  $|D\cap C_j|\geq 3$  for $j=1,2$ for every $3$-tuple dominating set $D$ of  $G$.   Thus  $\gamma_{\times  2}(G) \geq 6$. The result then holds from Lemma \ref{contenidouniversal} (when $|C_i| \geq 3$; in other case the problem is infeasible).
\item[vi.)] Suppose w.l.o.g. that $\alpha_1=1 \wedge \alpha_2=2$. 
It is not  difficult to see that it is enough to consider the case $|C_1|\geq 2$ and $|C_2|\geq 3$ (in any other case, $\alpha_1= 1$ together with $|C_2|=2$ imply the existence of a vertex $w \in C_2$ not adjacent to every $v\in C_1$,  leading to the infeasibility of the problem).

Let $S_1$ and $S_2$ be stable sets of $H_1$ and $H_2$ respectively where $\left|S_i\right|=\alpha_i$ for $i=1,2$. Then $S_1\cup S_2$ $2$-dominates $G$. Take two vertices $w_1\in C_1-S_1$ and $w_2\in C_2-S_2$, thus the set $S_1\cup S_2\cup \{w_1,w_2\}$ is a $3$-dominating set of $G$ of cardinality $5$, implying $\gamma_{\times 3}(G) \leq 5$. 

Now, since $\gamma_{\times 3}(G) \geq 4$ ($U=\emptyset$), it is enough to show that $\gamma_{\times k}(G)\neq 4$. Suppose  $D$ is a minimum $3$-tuple dominating set of $G$ with $|D|=4$ and denote $d_1=|D\cap C_1|$ and $d_2=|D\cap C_2|$. 
Consider $t_i=$ max $\{|N[x] \cap D_i|: x\in C_j\}$, for $i=1,2$, $i\neq j$. From Lemma \ref{alosumo},  $t_i\leq d_i-1$ for $i=1,2$, and moreover, $t_1+t_2\geq 3$ (otherwise, for each $x\in V$, $|N[x]\cap D|=|N[x]\cap D_1|+|N[x]\cap D_2|\leq t_1+t_2 <3$,  contradiction). 
Then $d_1+d_2-1=3\leq t_1+t_2\leq d_1+d_2-2=2$, which leads to a  contradiction. Thus, we have the desired equality.

\end{enumerate}

\qed\end{proof}

\begin{corollary}
The 3-tuple domination problem can be solved in linear time on C0P-graphs.
\end{corollary}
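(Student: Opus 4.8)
The plan is to show that each quantity appearing in the statement of Theorem~\ref{3tuple} can be extracted from $G$ in linear time, so that $\gamma_{\times 3}(G)$ is then obtained by a constant-time inspection of those quantities, exactly as was done for the $2$-tuple case.

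First I would form the augmented adjacency matrix $M^{*}(G)$ and apply the Fulkerson--Gross algorithm~\cite{fulkerson} to decide whether $M^{*}(G)$ has the C0P for columns and, if so, to produce a row permutation placing the $0$'s consecutively in every column; this runs in time linear in the size of $M^{*}(G)$. From the permuted matrix the partition $(C_1,C_2,U)$ is read off in a single pass (a column belongs to $U$ if it has no $0$, to $C_1$ if its run of $0$'s lies below the diagonal, and to $C_2$ if it lies above), and then, applying the last remark of Section~\ref{seccion3} to the subgraph $G-U$, the intervals $I_i$ defining the auxiliary interval graphs $H_1$ and $H_2$ are obtained directly as the endpoints of the corresponding $0$-runs, again in linear time. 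Finally $\alpha_1=\alpha(H_1)$ and $\alpha_2=\alpha(H_2)$ are computed in linear time, since a maximum stable set of an interval graph can be found in linear time~\cite{gavril}.

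It then remains to read off the answer. If $|U|\ge 3$ we have $\gamma_{\times 3}(G)=3$ by Lemma~\ref{Umasdek}; if $3>\delta(G)+1$ the instance is infeasible, which is detected while scanning the rows of $M^{*}(G)$; and in every remaining (feasible) case, with $|U|\in\{0,1,2\}$, the pair of values $|U|$ and $\alpha_1+\alpha_2$ selects one of the cases i.--vi. of Theorem~\ref{3tuple}, each of which gives $\gamma_{\times 3}(G)$ explicitly. Since all preprocessing is linear and the final selection is $O(1)$, the whole procedure is linear.

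I do not expect a genuine obstacle: the combinatorial content already resides in Theorem~\ref{3tuple}, and the only point needing care is to confirm that the case analysis there, together with Lemma~\ref{Umasdek} for $|U|\ge 3$ and the feasibility test $\delta(G)\ge 2$, is exhaustive. This holds once one observes that $U=\emptyset$ forces $\alpha_1,\alpha_2\ge 1$, hence $\alpha_1+\alpha_2\ge 2$, and that $|U|=1$ likewise forces $\alpha_1+\alpha_2\ge 2$, so the subcases split precisely as stated.
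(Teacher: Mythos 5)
Your proposal is correct and follows essentially the same route as the paper, whose entire proof is the observation that the stability number of an interval graph can be computed in linear time; you simply make explicit the preprocessing (Fulkerson--Gross recognition, reading off $(C_1,C_2,U)$ and the intervals) and the exhaustiveness of the case analysis in Theorem~\ref{3tuple}, which the paper leaves implicit.
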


\begin{proof} Follows from the fact that finding the stability  number of an interval graph can be done in linear time.
\qed\end{proof}

\begin{example}
Recall graph $G$ from Figure \ref{ejemplo1} and the auxiliary graphs $H_1$ and $H_2$ of Figure \ref{auxiliary}. The results exposed in this section can be applied approprietly in order to calculate the values of $\gamma_{\times i}(G)$ for $i=1,2,3,4$. Actually,  since $\alpha_1=2$ and $\alpha_2=1$, we have:

$$\gamma_{\times 4}(G)=\gamma_{\times 3}(G - v_7) +1= 5+1=6,$$

$$\gamma_{\times 3}(G)=\gamma_{\times 2}(G - v_7) +1= 3+1=4,$$

$$\gamma_{\times 2}(G)=\gamma_{\times 1}(G - v_7) +1= 2+1=3$$ and

$$\gamma_{\times 1}(G)= 1.$$

\bigskip

\bigskip

\bigskip

\begin{figure}[ht]\label{ejemplo2}
\centering
\includegraphics[scale=1.1 ]{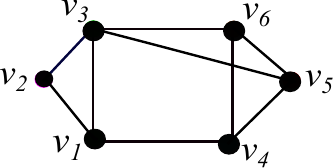}

\caption{Graph $G- U$, where $G$ is the graph of Example \ref{ejemplo1} and $U=\{v_7\}$.}
\end{figure}
\end{example}

\section{Further analysis for any $k$}\label{seccion5}

Some of the results in the previous section can be generalized in the following way:

\begin{proposition}\label{alfas}
Let $G$ be a $C0P$-graph with partition $(C_1, C_2)$  and graphs $H_i$ defined as in the previous section, for $i=1,2$.

\begin{enumerate}
\item if $\alpha_i=1$ and $D$ is a  $k$-tuple dominating set $D$ of  $G$, then  $|D\cap C_j|\geq k$  with $1 \leq i\neq j \leq2$;
\item if $\alpha_1+ \alpha_2=2$ then $\gamma_{\times k}(G)=2k$;             
\item if $\alpha_1+ \alpha_2>k$ then $\gamma_{\times k}(G)=k+1$. 
\item if $\alpha_1+ \alpha_2= k$ and $|C_i|\geq \alpha_i+1$ for $i=1,2$ then $\gamma_{\times k}(G)=k+2$. 
\end{enumerate}
\end{proposition}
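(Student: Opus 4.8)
The plan is to dispatch the four items by reducing each to results already in the paper --- Lemmas \ref{Umasdek} and \ref{contenidouniversal}, and above all Proposition \ref{estable} --- plus one new observation for item 1. Throughout I work with the partition $(C_1,C_2)$ (so $U=\emptyset$), using freely that $C_i\subseteq N_G[v]$ for $v\in C_i$, that a vertex $v_q\in C_j$ is non-adjacent to $v\in C_i$ exactly when $q\in I_v$, and that $\gamma_{\times k}(G)\geq k+1$. As in items v and vi of Theorem \ref{3tuple}, the stated equalities are meant for the feasible range: in item 2 one needs $k\leq\min\{|C_1|,|C_2|\}$ (otherwise $\gamma_{\times k}(G)=+\infty$), while in items 3 and 4 the hypotheses already force feasibility.

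For item 1 I would argue as follows. If $\alpha_i=1$ then $H_i$ is a complete graph, so its intervals pairwise intersect and, by the Helly property of intervals on a line, they share a common point $q^*$; the vertex $w:=v_{q^*}\in C_j$ is then non-adjacent to every vertex of $C_i$, whence $N_G[w]=C_j$. Therefore any $k$-tuple dominating set $D$ satisfies $|D\cap C_j|=|N_G[w]\cap D|\geq k$. Item 2 is then immediate: $\alpha_1+\alpha_2=2$ forces $\alpha_1=\alpha_2=1$, so every $k$-tuple dominating set meets each $C_i$ in at least $k$ vertices and $\gamma_{\times k}(G)\geq 2k$; the reverse inequality is Lemma \ref{contenidouniversal}. (This also re-derives item 4 of Theorem \ref{2tuple} and item v of Theorem \ref{3tuple}.)

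For item 3 I would imitate item 3 of Theorem \ref{2tuple} / item iv of Theorem \ref{3tuple}: since $\alpha_1+\alpha_2\geq k+1$ and $\alpha_i\geq 1$, choose stable sets $S_i$ of $H_i$ with $1\leq|S_i|\leq\alpha_i$ and $|S_1|+|S_2|=k+1$. For $v\in C_i$ we have $|N_G[v]\cap S_i|=|S_i|$ (as $S_i\subseteq C_i\subseteq N_G[v]$) and $|N_G[v]\cap S_j|\geq|S_j|-1$ by Proposition \ref{estable} (since $S_j$ is stable in $H_j$), so $|N_G[v]\cap(S_1\cup S_2)|\geq k$; hence $S_1\cup S_2$ is a $k$-tuple dominating set of size $k+1$, which together with $\gamma_{\times k}(G)\geq k+1$ gives equality. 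For the upper bound in item 4 I would repeat this with $|S_i|=\alpha_i$ and then adjoin one extra vertex $w_i\in C_i\setminus S_i$ on each side (which exist precisely because $|C_i|\geq\alpha_i+1$): for $v\in C_i$, $|N_G[v]\cap(S_i\cup\{w_i\})|=\alpha_i+1$ and $|N_G[v]\cap S_j|\geq\alpha_j-1$, so $D:=S_1\cup S_2\cup\{w_1,w_2\}$ is a $k$-tuple dominating set of size $k+2$.

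The heart of the matter --- and the step I expect to be the real obstacle --- is the matching lower bound $\gamma_{\times k}(G)\geq k+2$ in item 4; here I would replace the $t_1+t_2$ counting used for item vi with the following cleaner argument. Suppose $D$ is a $k$-tuple dominating set with $|D|=k+1$ and put $d_i=|D\cap C_i|$. For each $x\in C_1$ we have $k\leq|N_G[x]\cap D|=d_1+|N_G[x]\cap(D\cap C_2)|$, so every vertex of $C_1$ has at least $d_2-1$ neighbours in $D\cap C_2$; by Proposition \ref{estable} this forces $D\cap C_2$ to be a stable set of $H_2$, so $d_2\leq\alpha_2$, and symmetrically $d_1\leq\alpha_1$. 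Then $k+1=d_1+d_2\leq\alpha_1+\alpha_2=k$, a contradiction. Once this observation --- that a putative $(k+1)$-element solution forces each of $D\cap C_1$, $D\cap C_2$ to be stable in the auxiliary interval graph --- is in place, all four items follow at once.
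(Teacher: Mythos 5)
Your proof is correct, and items 1--3 together with the upper bound in item 4 follow the paper's own route almost verbatim (the paper's item 1 is exactly your Helly observation, phrased as the existence of a row of $M^*_{C_jC_i}$ that is all zeros). The one place where you genuinely diverge is the lower bound $\gamma_{\times k}(G)\geq k+2$ in item 4. The paper argues via the quantities $t_i=\max\{|N[x]\cap D_i|:x\in C_j\}$, invoking Lemma \ref{alosumo} to get $t_i\leq d_i-1$ and a separate counting claim to get $t_1+t_2\geq k$; note that, as written, that chain of inequalities never uses the hypothesis $\alpha_1+\alpha_2=k$ (it would equally ``refute'' the existence of a $(k+1)$-sized solution when $\alpha_1+\alpha_2>k$, contradicting item 3), and the two bounds on $t_i$ do not both hold for the same choice of max versus min, so the paper's version needs repair. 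Your replacement is cleaner and is exactly where the hypothesis enters: from $|D|=k+1$ you deduce that $D\cap C_j$ must $(d_j-1)$-dominate $G_i$, hence by Proposition \ref{estable} each $D\cap C_j$ is stable in $H_j$, so $k+1=d_1+d_2\leq\alpha_1+\alpha_2=k$. This buys a one-line contradiction that visibly consumes the assumption $\alpha_1+\alpha_2=k$, and it reuses the paper's central tool (Proposition \ref{estable}) instead of the weaker Lemma \ref{alosumo}. Your explicit feasibility caveat for item 2 (needing $|C_i|\geq k$ before invoking Lemma \ref{contenidouniversal}) is also a point the paper only makes implicitly, via the parenthetical in item v of Theorem \ref{3tuple}.
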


\begin{proof}
\begin{enumerate}
\item  W.l.o.g., asume $i=1$. Then  $\alpha_1=1$ implies that there exists $j$ with  $r+1 \leq j\leq n$ such that $m^*_{ji}=0$ for all $i$ with $1 \leq i\leq r$. This means that  vertex $v_j$ ($v_j\in C_2$) is non adjacent to every vertex of $C_1$, thus  $|D\cap C_2|\geq k$ for each  $k$-tuple dominating set  $D$ of  $G$.

\item If $\alpha_1=1=\alpha_2$, then previous item implies that any $k$-tuple dominating set of  $G$ has at leat $2k$ vertices.   Thus  $\gamma_{\times  k}(G) \geq 2k$. The equality holds from Lemma \ref{contenidouniversal}.

\item Let $S_1$ and $S_2$ be stable sets of $H_1$ and $H_2$ respectively, with $|S_1\cup S_2|=k+1$. Clearly, $S_i$ $|S_i|$-dominates $G_i$ and also $(|S_i|-1)$-dominates $G_j$  for $i=1,2$ and  $i\neq j$. Thus  $S_1\cup S_2$ is a $k$-tuple dominating set of $G$ and then  $\gamma_{\times k}(G) \leq k+1$. But  $\gamma_{\times k}(G) > k$ from Lemma \ref{Umasdek} and then $\gamma_{\times k}(G) = k+1$. 

\item  Let $S_1$ and $S_2$ be maximum stable sets of $H_1$ and $H_2$ respectively. It is clear that $S_1\cup S_2$ is a $(\alpha_1+\alpha_2-1)$-dominating set of $G$, i.e a $(k-1)$-dominating set of $G$. Take two vertices $w_1\in C_1-S_1$ and $w_2\in C_2-S_2$. Thus the set $S_1\cup S_2\cup \{w_1,w_2\}$ is a $k$-tuple dominating set of $G$ of cardinality $k+2$, implying $\gamma_{\times k}(G)\leq k+2$. 

Now, since $\gamma_{\times k}(G) \geq k+1$ ($U=\emptyset$), it suffies to show that $\gamma_{\times k}(G)\neq k+1$. Suppose  $D$ is a minimum $k$-tuple dominating set of $G$ with $|D|=k+1$ and denote $d_1=|D\cap C_1|$ and $d_2=|D\cap C_2|$. 
Consider $t_i=$ max $\{|N[x] \cap D_i|: x\in C_j\}$, for $i=1,2$, $i\neq j$. From Lemma \ref{alosumo},  $t_i\leq d_i-1$ for $i=1,2$, and moreover, $t_1+t_2\geq k$ (otherwise, for each $x\in V$, $|N[x]\cap D|=|N[x]\cap D_1|+|N[x]\cap D_2|\leq t_1+t_2 <k$, a contradiction). 
Then $d_1+d_2-1=k\leq t_1+t_2\leq d_1+d_2-2$, whcih leads to a  contradiction. Thus, we have the desired equality.
\end{enumerate}
\qed\end{proof}

\section{Conclusions}

In this work,  we solved in linear time the $k$-tuple domination problem on a subclass of circular-arc graphs, called C0P-graphs for $ 2\leq k\leq |U|+3$, where $U$ is the set of universal vertices of the input graph $G$. We think that ---under a suitable implementation--- the thecniques used in this paper together with the more general result in Theorem \ref{alfas} can be further developed to solve the  problem for the remaining values of $k$, even for other subclasses or moreover, the whole class of circular-arc graphs where the problems remain unsolved.

\end{document}